\newtheorem{thm}{Theorem}[section]
\newtheorem{lemma}[thm]{Lemma}
\theoremstyle{definition}
\newtheorem{defn}[thm]{Definition}
\theoremstyle{remark}
\newtheorem{rem}[thm]{Remark}
\numberwithin{equation}{section}
\newcommand{\norm}[1]{\left\Vert#1\right\Vert}
\newcommand{\N}{\mathbb{N}}
\newcommand{\R}{\mathbb{R}}
\newcommand{\diverg}{\textnormal{div}}
\newcommand{\dys}{\displaystyle}
\begin{document}
\title[The regularizing effects of the some order terms...]{The regularizing effects of some lower order terms in an elliptic equation with degenerate coercivity}

\author{Gisella Croce}
\address{Laboratoire de Math\'ematiques Appliqu\'ees du Havre
\\
Universit\'e du Havre
\\
25, rue Philippe Lebon, BP 540, 76058 Le Havre FRANCE}
\email{gisella.croce@univ-lehavre.fr}
\thanks{}
\subjclass{35B45, 35D05, 35J65, 35J70}
\keywords{nonlinear elliptic problem, boundary value problem, irregular data, degenerate ellipticity, distributional solution}
\begin{abstract}
In this article we study an elliptic problem with degenerate coercivity. 
We will show that the presence of some lower order terms
has a regularizing effect on the solutions, if we assume that the datum $f \in L^m(\Omega)$ for some $m\geq 1$.
\end{abstract}

\maketitle

\section{Introduction}
Consider the problem
\begin{equation}\label{laplaciano_polinomio}
\left\{
\begin{array}{cl}
\displaystyle
-\Delta u + |u|^{p-1}u=f& {\rm in}\,\,\Omega
\\
u=0 & {\rm on}\,\,\partial \Omega\,.
\end{array}
\right.
\end{equation}
It is known that the lower order term $|u|^{p-1}u$ has a regularizing effect on the solutions: indeed  
the solution $u$ belongs to the Lebesgue space $L^p(\Omega)$ under the weak assumption that the datum $f \in L^1(\Omega)$ (see \cite{strauss}); moreover 
$\nabla u \in L^q(\Omega)$, $q<\frac{2p}{p+1}$ (see \cite{bgv}).

A stronger effect 
can be observed if we consider a lower order term $h(u)$ where $h: [0,\sigma)\to \R^+$ is a continuous, increasing function with a vertical asymptote in 
$\sigma\,\, (\sigma>0)$. 
Indeed in \cite{B_proceed} it is shown that if $f$ is an $L^1(\Omega)$ positive function, then there exists a bounded $H^1_0(\Omega)$ solution to
\begin{equation}\label{laplaciano_asintoto}
\left\{
\begin{array}{cl}
\displaystyle
-\Delta u + h(u)=f& {\rm in}\,\,\Omega
\\
u=0 & {\rm on}\,\,\partial \Omega\,.
\end{array}
\right.
\end{equation}
All these results have been proved for general nonlinear coercive elliptic problems too.

In this article we want to analyse the regularizing effects of the same kind of lower order terms as in problems (\ref{laplaciano_polinomio}) 
and (\ref{laplaciano_asintoto})
in the case of an elliptic operator with degenerate coercivity.  
More in details, let us consider the differential operator
\begin{equation}\label{operatore}
A(v)=-\diverg\left(a(x,v)\nabla v\right)\,,\,\,\,\,\,\,\,v \in H^1_0(\Omega)
\end{equation}
under the following assumptions: 
$\Omega$ is an open bounded subset of $\R^N, N\geq 3,$
and 
$a: \Omega \times \R\to \R$ is a Carath\'eodory function such that for a.e. $x \in \Omega$ and for every $s \in \R$
\begin{equation}\label{ellitticita}
a(x,s)\geq \frac{\alpha}{(1+|s|)^{\gamma}}
\end{equation} 
and 
\begin{equation}\label{crescita}
|a(x,s)|\leq \beta
\end{equation} 
for some real positive constants $\alpha, \beta, \gamma$. 
Assumption (\ref{ellitticita}) implies that the differential operator 
$A$ 
is well defined on $H^1_0(\Omega)$, but it fails to be coercive 
on the same space when $v$ is large (see \cite{Po}). 
Due to the lack of coercivity, the classical theory for elliptic operators acting 
between spaces in duality (see \cite{L}) cannot be
applied. 
Despite this difficulty, some papers (see \cite{abfot}, \cite{LB}, \cite{bb}, \cite{bdo}, 
\cite{cirmi-degenere}, \cite{michaela} and \cite{daniela}) have been written about 
Dirichlet problems with the differential operator $A$.
In particular in  \cite{abfot}, \cite{LB} and \cite{bdo} it has been shown the existence of solutions to the differential problem
\begin{equation}\label{without}
\left\{
\begin{array}{cl}
\displaystyle
-\diverg\left(a(x,u)\nabla u\right)=f& {\rm in}\,\,\Omega
\\
u=0 & {\rm on}\,\,\partial \Omega
\end{array}
\right.
\end{equation}
in the case where $f\in L^m(\Omega)$ with $m\geq 1$
and $\gamma \in (0,1]$. 
Indeed, let  
\begin{equation}\label{somm_u}
r=\frac{Nm(1-\gamma)}{N-2m}
\end{equation}
and 
\begin{equation}\label{somm_grad}
{q}=\frac{Nm(1-\gamma)}{N-m(1+\gamma)}\,.
\end{equation}
If $f \in L^m(\Omega)$, with $1\leq m\leq \max\left\{\frac{N}{N+1-\gamma(N-1)}, 1\right\}$
problem (\ref{without}) admits a solution $u$ such that
$$
|u|^s \in L^{1}(\Omega),\,\,\,\,s<r
$$
and 
$$
|\nabla u|^s \in L^{1}(\Omega),\,\,\,\,\,s<{q}\,.
$$
If $\frac{N}{N+1-\gamma(N-1)}<m \leq \frac{2N}{N(1-\gamma)+2(\gamma + 1)}$,
then $$u \in W^{1,q}_0(\Omega).$$
If $\frac{2N}{N(1-\gamma)+2(\gamma + 1)}\leq m <\frac{N}{2}$, then 
$$u \in H^1_0(\Omega)\cap L^{r}(\Omega)\,.$$ 
If $m>\frac{N}{2}$, then 
$$
u \in H^1_0(\Omega)\cap L^{\infty}(\Omega)\,.
$$

In the case where $\gamma>1$, the effect of the degenerate coercivity is even worst: 
indeed problem (\ref{without}) 
may have no solution at all, even if the datum $f$ is a constant function (see \cite{bdo} and \cite{abfot}).
In any case the degenerate coercivity of the operator $A$ gives solutions less regular than 
the solutions to uniformly elliptic problems.

In the spirit of problem (\ref{laplaciano_polinomio}), 
the first problem that we will consider in this article is  
\begin{equation}\label{problema_di_Dirichlet}
\left\{
\begin{array}{cl}
\displaystyle
-\diverg\left(a(x,u)\nabla u\right)+|u|^{p-1}u=f& {\rm in}\,\,\Omega
\\
u=0 & {\rm on}\,\,\partial \Omega\,.
\end{array}
\right.
\end{equation}
We will study the existence of distributional solutions, that is
the existence of some function $u$ in $W^{1,1}_0(\Omega)$ (at least) such that $|u|^p \in L^1(\Omega)$ and
$$
\int_{\Omega}{a(x,u)}{\nabla u}\cdot \nabla \varphi+
\int_{\Omega}|u|^{p-1}u\,\varphi=\int_{\Omega}f\,\varphi
$$
for every $\varphi \in C^{\infty}_0(\Omega).$
We will also use a weaker notion of solution, 
the entropy solution. We recall that this type of solution has been introduced in \cite{BBGGPV}.
To give this definition we need the following lemma (see \cite{BBGGPV} for the proof). We define
$$
T_k(s)=\max\{-k, \min\{k,s\}\}
$$
\begin{lemma}
Let  $u: \Omega \to \R$ be measurable function  such that $T_k(u) \in H^1_0(\Omega)$ for every $k>0$. Then  
there exists an unique measurable $v:\Omega\to \R^N$ such that
$$
\nabla T_k(u)=v\,\chi_{\{|u|<k\}}\quad {q.o.\,\, in}\,\, \Omega\,\,\,\forall\, k>0\,;
$$
the map $v$ is called the weak gradient of $u$. 
If $u \in W^{1,1}_0(\Omega)$, then the weak gradient of $u$ coincides with its standard distributional gradient. 
\end{lemma}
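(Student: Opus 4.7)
The plan is to construct $v$ explicitly via an exhaustion of $\Omega$ by the truncation level sets $\{|u|<k\}$, and then check that all the compatibility requirements reduce to the standard Stampacchia-type lemma: if $w_1, w_2 \in H^1_0(\Omega)$ coincide on a measurable set $E$, then $\nabla w_1 = \nabla w_2$ almost everywhere on $E$ (in particular, $\nabla w_1 = 0$ a.e.\ on $\{w_1 = 0\}$).

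First I would define $v$ pointwise almost everywhere by setting $v(x) = \nabla T_k(u)(x)$ for any integer $k$ with $k > |u(x)|$. The set where $|u| = +\infty$ has measure zero because $u$ is real-valued, so $\Omega = \bigcup_k \{|u|<k\}$ up to a null set and $v$ is defined a.e. The key point is consistency: if $k_1 < k_2$, then $T_{k_2}(u) - T_{k_1}(u)$ vanishes on $\{|u| \le k_1\}$, and since both truncations belong to $H^1_0(\Omega)$, the Stampacchia lemma gives $\nabla T_{k_2}(u) = \nabla T_{k_1}(u)$ a.e.\ on $\{|u|<k_1\}$. Thus the value $v(x)$ is independent of which admissible $k$ we picked, and $v$ is a well-defined measurable map $\Omega \to \R^N$.

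Next I would verify the defining equation $\nabla T_k(u) = v\,\chi_{\{|u|<k\}}$ for every fixed $k>0$. On $\{|u|<k\}$ the definition of $v$ allows the choice of this very $k$, so $v = \nabla T_k(u)$ a.e.\ on that set. Off $\{|u|<k\}$, i.e.\ on $\{|u|\ge k\}$, the function $T_k(u)$ is constant ($\pm k$), so a second application of the Stampacchia lemma yields $\nabla T_k(u) = 0$ a.e.\ there. Combining these two statements gives the desired equality. Uniqueness is then immediate: any other $v'$ satisfying the identity must agree with $v$ on each $\{|u|<k\}$, and these sets exhaust $\Omega$ up to a null set.

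For the last assertion, assume $u \in W^{1,1}_0(\Omega)$ with distributional gradient $\nabla u$. Applying the chain rule for compositions of $W^{1,1}$ functions with Lipschitz maps to $T_k$ gives $\nabla T_k(u) = \chi_{\{|u|<k\}}\nabla u$ a.e. Comparing with the identity $\nabla T_k(u) = v\,\chi_{\{|u|<k\}}$ just established yields $v = \nabla u$ a.e.\ on $\{|u|<k\}$; letting $k \to \infty$ gives $v = \nabla u$ a.e.\ on $\Omega$. The only genuinely delicate point in the whole argument is the Stampacchia lemma used in the first two steps; once one has that tool in hand, everything else is a bookkeeping exercise, and this is why the authors state the result rather than re-proving it, simply citing \cite{BBGGPV}.
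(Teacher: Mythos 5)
Your argument is correct. Note that the paper does not prove this lemma at all --- it simply cites \cite{BBGGPV}, where this is Lemma 2.1 --- and what you have written is essentially the standard proof from that reference: define $v$ by patching the representatives $\nabla T_k(u)$ over the exhaustion $\{|u|<k\}$, use the fact that a $W^{1,1}$ function has vanishing gradient a.e.\ on the set where it vanishes (applied to $T_{k_2}(u)-T_{k_1}(u)$ for consistency, and to $T_k(u)\mp k$ on $\{|u|\ge k\}$ for the defining identity), and recover the distributional gradient via the chain rule when $u\in W^{1,1}_0(\Omega)$. The one point worth making explicit is that the pointwise definition of $v$ should be read as a countable patching of chosen representatives of $\nabla T_k(u)$, $k\in\N$, over the measurable sets $\{|u|<k\}$, which is what guarantees measurability of $v$; your consistency step ensures this is well defined up to a null set.
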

\begin{defn}\label{def-entropy}
Let $f$ be an $L^m(\Omega)$ function, with $m\geq 1$. 
A measurable function $u: \Omega \to \R$ such that 
$|u|^p \in L^1(\Omega)$ 
and $T_k(u) \in H^1_0(\Omega)$ for every $k>0$ 
is an entropy solution to  problem (\ref{problema_di_Dirichlet}) 
if
$$
\int_{\Omega}{a(x,u)\nabla u\cdot\nabla T_k(u-\varphi)}+
\int_{\Omega} |u|^{p-1}u \,T_k(u-\varphi)
\leq 
\int_{\Omega}{f\,T_k(u-\varphi)}
$$
for every $k>0$ and for every $\varphi\in H^{1}_0(\Omega)\cap L^{\infty}(\Omega).$
\end{defn}
Note that this definition is useful in the case where the solution of 
problem (\ref{problema_di_Dirichlet}) does not necessary belong to a Sobolev space. 
Indeed, about the gradient of the solution,  it has a sense under the weak hypothesis that
$\nabla T_k(u) \in L^2(\Omega)$, we don't need that $\nabla u \in L^1(\Omega)$, as for distributional solutions.

\begin{defn} 
The Marcinkiewicz space $M^s(\Omega)$, $s>0$, is made up of all measurable functions $v: \Omega \to \R$ such that
$|\{|v|\geq k\}|\leq \frac{C}{k^s}$ for all $k>0$, for a positive constant $C>0$.
\end{defn}
One can show that, if $|\Omega|$ is finite and if $0<\varepsilon<s-1$
\begin{equation}\label{embedding-sob-marc}
L^s(\Omega)\subset M^s(\Omega)\subset L^{s-\varepsilon}(\Omega)\,.
\end{equation}

\smallskip
We can now state our existence results for problem (\ref{problema_di_Dirichlet}).
\begin{thm}\label{dato_in_luno}
Let $f \in L^{1}(\Omega)$. 
\begin{enumerate}
\item
If $p>\gamma+1$, then there exists a distributional solution $u$ to problem (\ref{problema_di_Dirichlet}) such that 
$$
u \in W^{1,s}_0(\Omega)\cap L^p(\Omega)\,,\,\,\,s<\frac{2p}{\gamma +1+p}\,.
$$ 
\item 
If $0<p\leq \gamma +1$, then there exists an entropy solution $u$ to problem (\ref{problema_di_Dirichlet}) such that
$$
|u|^p \in L^1(\Omega)\,\,and\,\,|\nabla u| \in M^{\frac{2p}{\gamma +1+p}}(\Omega).
$$
\end{enumerate}
\end{thm}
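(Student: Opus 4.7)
The plan is to construct solutions as limits of approximate problems in which the datum is regularized. Setting $f_n=T_n(f)\in L^\infty(\Omega)$, with $\|f_n\|_{L^1(\Omega)}\leq\|f\|_{L^1(\Omega)}$, one obtains for each $n$ a bounded weak solution $u_n\in H^1_0(\Omega)\cap L^\infty(\Omega)$ to
\begin{equation*}
-\diverg(a(x,u_n)\nabla u_n)+|u_n|^{p-1}u_n=f_n\text{ in }\Omega,\qquad u_n=0\text{ on }\partial\Omega,
\end{equation*}
via a Schauder fixed point argument (freezing the second slot of $a$ makes the resulting problem coercive on $H^1_0(\Omega)$), the $L^\infty$ bound coming from Stampacchia's method together with the sign condition of the absorption term.

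Two a priori estimates, uniform in $n$, form the core of the argument. First, testing with an $H^1_0$-approximation of $\mathrm{sign}(u_n)$ (for example $T_\delta(u_n)/\delta$, letting $\delta\to 0$) and dropping the non-negative diffusion integral yields
\begin{equation*}
\int_\Omega|u_n|^p\leq\|f\|_{L^1(\Omega)}.
\end{equation*}
Second, taking $T_k(u_n)$ as test function, applying (\ref{ellitticita}) on $\{|u_n|<k\}$ and discarding the non-negative absorption contribution gives
\begin{equation*}
\int_{\{|u_n|<k\}}|\nabla u_n|^2\leq\frac{k(1+k)^\gamma}{\alpha}\|f\|_{L^1(\Omega)}.
\end{equation*}
The decomposition $\{|\nabla u_n|>h\}\subset\{|u_n|>k\}\cup(\{|\nabla u_n|>h\}\cap\{|u_n|\leq k\})$, estimated by the $L^p$ bound and by Chebyshev's inequality respectively, and then optimized in $k$ through the balance $k^{\gamma+1+p}\sim h^2$, produces the uniform Marcinkiewicz bound $|\nabla u_n|\in M^{2p/(\gamma+1+p)}(\Omega)$. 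By (\ref{embedding-sob-marc}) the sequence $\nabla u_n$ is thus uniformly bounded in $L^s(\Omega)$ for every $s<2p/(\gamma+1+p)$; in case $(1)$, $p>\gamma+1$ makes this exponent strictly greater than $1$, so $u_n$ is uniformly bounded in $W^{1,s}_0(\Omega)$ for some $s>1$.

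The main obstacle is the passage to the limit: weak convergence of $\nabla u_n$ does not suffice to identify the limit of $a(x,u_n)\nabla u_n$, because the coefficient depends on the solution. I would therefore invoke a Boccardo--Murat-type argument to obtain the almost everywhere convergence of the gradients: using $T_1(u_n-T_k(u))$ and related cutoff test functions, together with the $L^p$ control on the tails of $u_n$ and the sign of the absorption term, one shows that $T_k(u_n)\to T_k(u)$ strongly in $H^1_0(\Omega)$ for every $k>0$, hence $\nabla u_n\to\nabla u$ a.e.\ on $\Omega$ along a subsequence. In case $(1)$, Vitali's theorem together with the uniform $W^{1,s}_0$ bound then lets one pass to the limit directly in the distributional formulation. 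In case $(2)$, where no Sobolev bound beyond $W^{1,1}$ is available, one tests the equation for $u_n$ against $T_k(u_n-\varphi)$ with $\varphi\in H^1_0(\Omega)\cap L^\infty(\Omega)$, uses Fatou's lemma on the absorption and data integrals and the weak lower semicontinuity of the diffusion integral applied to the truncations, and passes to the limit to recover the entropy inequality of Definition~\ref{def-entropy}, while the Marcinkiewicz bound for $|\nabla u|$ transfers from $u_n$ by Fatou applied to level sets.
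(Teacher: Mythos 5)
Your overall architecture (truncation of the datum, the estimate $\int_\Omega|u_n|^p\le\|f\|_{L^1}$ obtained from $T_\delta(u_n)/\delta$, the energy estimate $\int_\Omega|\nabla T_k(u_n)|^2\le Ck(1+k)^\gamma$, the level-set decomposition yielding $|\nabla u_n|\in M^{2p/(\gamma+1+p)}$, and the entropy passage via Fatou plus weak lower semicontinuity) matches the paper's. But there is one genuine gap, in the limit of the absorption term in case (1). To pass to the limit in $\int_\Omega|u_n|^{p-1}u_n\,\varphi$ you invoke ``Vitali's theorem together with the uniform $W^{1,s}_0$ bound''; this is not enough. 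The Sobolev embedding gives at best a uniform bound for $u_n$ in $L^{s^*}$ with $s^*<2N/(N-2)$, a fixed exponent, whereas $p$ may be arbitrarily large, so the $W^{1,s}_0$ bound yields no $L^q$ bound on $|u_n|^p$ with $q>1$. The only other information available is $\int_\Omega|u_n|^p\le\|f\|_{L^1}$, and a uniform $L^1$ bound together with a.e.\ convergence does not give $L^1$ convergence: Vitali requires equi-integrability of $|u_n|^p$. The paper extracts it from the equation itself: testing with smooth approximations of $\mathrm{sign}(u_n)\chi_{\{|u_n|>t\}}$ gives the tail estimate $\int_{\{|u_n|>t\}}|u_n|^p\le\int_{\{|u_n|>t\}}|f|$, whose right-hand side is small uniformly in $n$ for $t$ large because $|\{|u_n|>t\}|\le \|f\|_{L^1}/t^p$ and $f\in L^1(\Omega)$; that is precisely the missing hypothesis of Vitali. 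You allude to an ``$L^p$ control on the tails of $u_n$'' at one point, but you never derive it, and it does not follow from the estimates you actually established.

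A secondary remark: for the principal part of the distributional formulation you assert that weak convergence of $\nabla u_n$ cannot identify the limit of $a(x,u_n)\nabla u_n$ and you bring in a Boccardo--Murat argument for a.e.\ convergence of the gradients. That machinery (which would itself need to be justified in this degenerate setting) is unnecessary here: since $a$ is bounded by assumption \eqref{crescita} and is a Carath\'eodory function, dominated convergence gives $a(x,T_n(u_n))\nabla\varphi\to a(x,u)\nabla\varphi$ strongly in $L^{s'}(\Omega)$ using only the a.e.\ convergence of $u_n$, and pairing this with $\nabla u_n\rightharpoonup\nabla u$ in $L^{s}(\Omega)$ already identifies the limit. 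This is the paper's (much shorter) route; likewise, in the entropy part the a.e.\ convergence of gradients is avoided by splitting the diffusion term and using weak lower semicontinuity of the quadratic part along the truncations, as in \cite{bdo}. Your detour is not wrong in principle, but it replaces a one-line weak--strong pairing with a substantial unproved lemma.
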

Observe that the presence of the lower order term $|u|^{p-1}u$ guarantees the existence of a distributional solution
if $f$ is a $L^1(\Omega)$ function. On the contrary, problem (\ref{without}) may have no distributional solution, because the summability 
of the gradient of the solutions may be lower than 1. 

In the case where $f \in L^m(\Omega)$ with $m>1$, we will prove the following existence result.
\begin{thm}\label{thm_soluzioni_a_energia_finita}
Let $f \in L^{m}(\Omega)$, with $m>1$. 
\begin{enumerate}
\item
If $p \geq \dys \frac{\gamma +1}{m-1}$,
then there exists a distributional solution $u$ to problem (\ref{problema_di_Dirichlet}) such that 
$$
u\in H^1_0(\Omega) \cap L^{pm}(\Omega)\,.$$
\item 
If $\dys\frac{\gamma}{m-1}< p < \dys \frac{\gamma +1}{m-1}$, 
then there exists a distributional solution $u$ to problem (\ref{problema_di_Dirichlet})
such that 
$$|u|^{pm} \in L^{1}(\Omega)\,\, and\,\,u \in W^{1,\frac{2pm}{\gamma +1+p}}_0(\Omega)\,.$$ 
\item
If  $\dys 0 < p\leq \frac{\gamma}{m-1}$, then there exists an entropy solution $u$ 
to problem (\ref{problema_di_Dirichlet})
such that
$$|u|^{pm} \in L^{1}(\Omega)\,\, and\,\,  
|\nabla u| \in M^{\frac{2pm}{\gamma +1+p}}(\Omega).
$$
\end{enumerate}
\end{thm}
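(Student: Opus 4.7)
The plan is to argue by a standard truncation-based approximation, derive a single uniform a priori bound that already splits the three regimes at the algebraic level, and then pass to the limit with the appropriate notion of solution in each case. Set $a_n(x,s):=a(x,T_n(s))$ and $f_n:=T_n(f)$. Because $a_n(x,s)\ge \alpha(1+n)^{-\gamma}>0$ and $|a_n|\le\beta$, the operator $v\mapsto -\diverg(a_n(x,v)\nabla v)+|v|^{p-1}v$ is pseudomonotone and coercive on $H^1_0(\Omega)$ and monotone in its zeroth-order part, so Leray--Lions theory gives a weak solution $u_n\in H^1_0(\Omega)\cap L^\infty(\Omega)$ of
$-\diverg(a_n(x,u_n)\nabla u_n)+|u_n|^{p-1}u_n=f_n$.

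The core a priori estimate comes from testing with $\phi_n:=[(1+|u_n|)^{\theta+1}-1]\operatorname{sign}(u_n)$, where $\theta:=p(m-1)-1>-1$. Since $\nabla\phi_n=(\theta+1)(1+|u_n|)^\theta\nabla u_n$, using (\ref{ellitticita}), the elementary inequality $(1+|s|)^{\theta+1}-1\ge c_0|s|^{\theta+1}$ for $|s|\ge 1$, and H\"older plus Young on the right-hand side with the key algebraic identity $(\theta+1)m'=pm$, one obtains, after absorption,
\[
\int_\Omega\frac{|\nabla u_n|^2}{(1+|u_n|)^{\gamma-\theta}}+\int_\Omega |u_n|^{pm}\le C,
\]
uniformly in $n$. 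The $L^{pm}$ estimate is already the first half of the conclusion in all three cases.

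The gradient bound then splits according to the sign of $\theta-\gamma$. In case (1), $\theta\ge\gamma$, the weight $(1+|u_n|)^{\gamma-\theta}$ is bounded, hence $u_n$ is uniformly bounded in $H^1_0(\Omega)$. In case (2), $\theta\in(\gamma-1,\gamma)$ and applying H\"older to $|\nabla u_n|^q=\bigl[(1+|u_n|)^{-(\gamma-\theta)}|\nabla u_n|^2\bigr]^{q/2}(1+|u_n|)^{q(\gamma-\theta)/2}$ with $q=\tfrac{2pm}{\gamma+1+p}\in(1,2)$ (chosen so that $q(\gamma-\theta)/(2-q)=pm$) yields the desired uniform $W^{1,q}_0$ bound. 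In case (3), $\theta\le\gamma-1$ and the same exponent would be $\le1$, so I instead use the level-set split
\[
|\{|\nabla u_n|>k\}|\le|\{|u_n|>h\}|+\frac{(1+h)^{\gamma-\theta}}{k^2}\int_\Omega\frac{|\nabla u_n|^2}{(1+|u_n|)^{\gamma-\theta}}\le \frac{C}{h^{pm}}+\frac{C(1+h)^{\gamma-\theta}}{k^2},
\]
and optimize with $h\sim k^{2/(p+\gamma+1)}$; the algebraic identity $pm+\gamma-\theta=p+\gamma+1$ then produces the Marcinkiewicz bound $|\nabla u_n|\in M^{2pm/(p+\gamma+1)}(\Omega)$ uniformly.

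Passage to the limit is where the main obstacle lies. Extracting $u_n\to u$ a.e.\ is routine in cases (1)--(2) by Rellich--Kondrachov, and in case (3) follows from testing the equation with $T_k(u_n)$, which yields $\int|\nabla T_k(u_n)|^2\le Ck(1+k)^\gamma$ and so compactness of every truncation. Fatou gives $|u|^{pm}\in L^1(\Omega)$. The hard part is promoting this to a.e.\ convergence of $\nabla u_n$, needed both to identify the weak limit of $a_n(x,u_n)\nabla u_n$ as $a(x,u)\nabla u$ and to pass to the limit in $|u_n|^{p-1}u_n$. This is a Boccardo--Murat-type argument adapted to degenerate coercivity: for each fixed $k$, the matrix $a_n(x,u_n)$ restricted to $\{|u_n|\le k\}$ is uniformly elliptic with constant $\alpha(1+k)^{-\gamma}$ independent of $n$, so testing the difference equation for $u_n-u_j$ with $T_\delta(T_k(u_n)-T_k(u_j))$ and letting $\delta\to 0$ gives strong convergence of $\nabla T_k(u_n)$ in $L^2$, hence pointwise convergence of gradients up to a subsequence. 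With this in hand, the uniform $L^{pm}$ bound and Vitali's theorem let me pass to the limit in the weak formulation in cases (1)--(2) (where $q\ge 1$ makes the distributional formulation meaningful) and in the entropy inequality of Definition \ref{def-entropy} in case (3).
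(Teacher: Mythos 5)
Your a priori estimates are correct and, after unwinding the parametrization $\theta=p(m-1)-1$, they coincide with the paper's: your single test function $[(1+|u_n|)^{\theta+1}-1]\operatorname{sign}(u_n)$ is exactly the one the paper uses in Part II (and it reproduces Lemma \ref{stime_lemma} at the same time, which is a tidy unification); your case (1) observation that the weight is bounded when $\theta\geq\gamma$ replaces the paper's separate test function $[(1+|u_n|)^{\gamma+1}-1]\operatorname{sign}(u_n)$ but yields the same $H^1_0$ bound; your level-set optimization in case (3) is a correct self-contained proof of Lemma \ref{lemmabdo} with $s=pm$, $\rho=\gamma-p(m-1)+1$. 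All the exponent arithmetic checks out.

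The genuine gap is in the passage to the limit. The step ``testing the difference equation for $u_n-u_j$ with $T_\delta(T_k(u_n)-T_k(u_j))$'' is not a valid move as written: the two approximating equations have different, solution-dependent coefficients $a(x,T_n(u_n))$ and $a(x,T_j(u_j))$, so their difference is not an elliptic equation for $u_n-u_j$. Writing $a(x,T_n(u_n))\nabla u_n-a(x,T_j(u_j))\nabla u_j=a(x,T_n(u_n))\nabla(u_n-u_j)+[a(x,T_n(u_n))-a(x,T_j(u_j))]\nabla u_j$, the second (commutator) term pairs a merely bounded-in-$L^2$ gradient against a coefficient difference with no rate, and the Cauchy-sequence argument does not close without further work (the correct versions of this argument, as in \cite{bdo}, compare $u_n$ with the limit $u$ rather than with $u_j$, and are considerably more delicate). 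More to the point, none of this machinery is needed here, precisely because $a$ depends on $u$ but not on $\nabla u$: in cases (1)--(2) the paper passes to the limit in the principal part by the elementary weak--strong pairing ($\nabla u_n\rightharpoonup\nabla u$ weakly in $L^q$, while $a(x,T_n(u_n))\nabla\varphi\to a(x,u)\nabla\varphi$ strongly by dominated convergence using \eqref{crescita} and $u_n\to u$ a.e.), and the lower-order term converges in $L^1$ by Vitali since $|u_n|^p$ is bounded in $L^m$ with $m>1$ --- only a.e. convergence of $u_n$, not of $\nabla u_n$, is required. In case (3) the paper avoids strong gradient convergence altogether by using weak lower semicontinuity of $v\mapsto\int_\Omega a(x,u)|\nabla v|^2$ on the term $\int_\Omega a(x,T_n(u_n))|\nabla T_k(u_n-\varphi)|^2$ in the entropy inequality, which only needs the one-sided $\liminf$ bound. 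You should replace the Boccardo--Murat step by these arguments, or else justify it by comparing with the limit function rather than along a Cauchy sequence.
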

In this case too the lower order term $|u|^{p-1}u$ has a regularizing effect. Indeed, if $p>\frac{\gamma}{m-1}$, we have a distributional solution
for any $m>1$. This is not always the case for problem (\ref{without}). If $p$ is larger, that is 
$p>\frac{\gamma +1}{m-1}$  we even find  
a finite energy (i.e., $H^1_0(\Omega)$) solution, for any $m>1$.

Notice that in the case where $\gamma=0$, we get the same results obtained in \cite{cirmi}, for elliptic coercive problems;
moreover these results are a generalization of \cite{bb}.

\medskip
In the spirit of problem (\ref{laplaciano_asintoto}), the second problem that we will study in this paper is
\begin{equation}\label{probl_asintoto}
\left\{
\begin{array}{cl}
\displaystyle
-\diverg\left(a(x,u)\nabla u\right)+h(u)=f& {\rm in}\,\,\Omega
\\
u=0 & {\rm on}\,\,\Omega
\end{array}
\right.
\end{equation}
under the following assumptions on the lower order term $h$. Let $\sigma$ be 
a real positive constant; we assume that the function $h: [0,\sigma) \to \R$ 
is increasing, continuous, 
$h(0)=0$
and
$
\lim_{s\to \sigma^-} h(s)=+\infty
$.

We will show the existence of a  solution $u$ such that
 $h(u) \in L^1(\Omega)$ and
$$
\int_{\Omega}{a(x,u)}{\nabla u}\cdot \nabla \varphi+
\int_{\Omega}h(u)\varphi=\int_{\Omega}f\,\varphi
$$
for every $\varphi \in H^1_0(\Omega)\cap L^{\infty}(\Omega).$

\begin{thm}\label{asintoto}
Let $f$ be a positive $L^{1}(\Omega)$ function. Then there exists a solution 
$u \in H^1_0(\Omega)\cap L^{\infty}(\Omega)$ 
to problem (\ref{probl_asintoto}).
\end{thm}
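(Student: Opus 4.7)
My plan is to obtain $u$ as a limit of solutions to approximate problems in which both the coefficient and the lower order term have been made regular. I would set $f_n=T_n(f)\geq 0$, $a_n(x,s)=a(x,T_n(s))$ (which is bounded above by $\beta$ and uniformly coercive with constant $\alpha/(1+n)^\gamma$), and $h_n(s)=h(\min\{s^+,\sigma-1/n\})$, so that $h_n$ is continuous, nondecreasing, bounded, vanishes on $(-\infty,0]$, and $h_n\to h$ pointwise on $[0,\sigma)$. Classical Leray--Lions theory applied to the uniformly elliptic operator with bounded perturbation $h_n$ yields a solution $u_n\in H^1_0(\Omega)\cap L^\infty(\Omega)$ to
\[
-\diverg(a_n(x,u_n)\nabla u_n)+h_n(u_n)=f_n,\qquad u_n|_{\partial\Omega}=0,
\]
and testing with $-u_n^-$ gives $u_n\geq 0$ a.e.\ (using $f_n\geq 0$ and $h_n\equiv 0$ on $(-\infty,0]$).

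The heart of the argument is a measure estimate exploiting the vertical asymptote of $h$. For $0<k<\sigma-1/n$, testing with $\tfrac{1}{\delta}T_\delta(u_n-k)^+$ and letting $\delta\to 0$, after discarding the nonnegative diffusion term, yields
\[
h(k)\,|\{u_n>k\}| \ \leq\ \int_{\{u_n>k\}}h_n(u_n) \ \leq\ \|f\|_{L^1(\Omega)}.
\]
Combined with Fatou, this will force $|\{u\geq\sigma\}|=0$, i.e.\ $\|u\|_{L^\infty}\leq \sigma$, once a.e.\ convergence is in hand. For the energy bound, testing with $T_K(u_n)$ for fixed $K<\sigma$ and noting that on $\{u_n\leq K\}$ one has $a_n(x,u_n)\geq \alpha/(1+K)^\gamma$ (for $n\geq K$), gives
\[
\int_\Omega|\nabla T_K(u_n)|^2 \ \leq\ \frac{K(1+K)^\gamma\|f\|_{L^1}}{\alpha},
\]
uniformly in $n$. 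A diagonal extraction in $K$ produces $u_n\to u$ a.e.\ with $T_K(u_n)\rightharpoonup T_K(u)$ weakly in $H^1_0$; letting $K\to \sigma^-$ and invoking weak lower semicontinuity then yields $u\in H^1_0(\Omega)$.

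The main difficulty will be passing to the limit in the nonlinear diffusion term, since weak convergence of $\nabla u_n$ does not suffice when $a_n$ depends on $u_n$. The key observation is that the bound $\|u\|_{L^\infty}\leq \sigma$ makes the \emph{limit} operator uniformly elliptic. I would then apply the standard Boccardo--Murat technique---testing the difference of the approximate and limit equations with a suitable truncation of $u_n-u$ and exploiting the monotonicity of $h_n$---to obtain a.e.\ convergence of $\nabla u_n$, after which the diffusion term passes to the limit via Vitali. For the lower-order term, $h_n(u_n)\to h(u)$ a.e., and the uniform bound $\int_\Omega h_n(u_n)\leq \|f\|_{L^1}$ (obtained by testing with $\tfrac{1}{\delta}T_\delta(u_n)$ and letting $\delta\to 0$) gives the equi-integrability needed to conclude $h_n(u_n)\to h(u)$ in $L^1(\Omega)$, showing in particular $h(u)\in L^1(\Omega)$ and completing the passage to the limit in the equation.
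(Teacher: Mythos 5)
Your a priori estimates on the approximating solutions are correct as far as they go, but there is a genuine gap in the passage to the limit in the principal part, and it traces back to your choice of approximation. You never establish a uniform pointwise bound on $u_n$ itself: with $h_n(s)=h(\min\{s^+,\sigma-1/n\})$ the lower order term saturates at the value $h(\sigma-1/n)$, which need not dominate $\|T_n(f)\|_{\infty}$ (think of an $h$ that blows up slowly near $\sigma$), so nothing prevents $u_n$ from exceeding $\sigma$ on a set of positive measure, where your truncation estimates give no control of $\nabla u_n$. Consequently you only bound $\int_\Omega|\nabla T_K(u_n)|^2$; the full gradient $|\nabla u_n|$ is not shown to be bounded, let alone equi-integrable, in any $L^q(\Omega)$. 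This is fatal for your final step: even granting a.e.\ convergence of the gradients from a Boccardo--Murat argument, applying Vitali to $\int_\Omega a_n(x,u_n)\nabla u_n\cdot\nabla\varphi$ requires equi-integrability of $|\nabla u_n|$ over all of $\Omega$, and the contribution of the uncontrolled sets $\{u_n>K\}$ cannot be discarded. (Your conclusions that the \emph{limit} satisfies $u\in H^1_0(\Omega)$ and $0\le u\le\sigma$ are fine; the problem is passing to the limit in the equation.)

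The missing idea, which is the crux of the paper's proof, is an $L^\infty$ bound on the approximations \emph{strictly below} $\sigma$. The paper first treats $f\in L^\infty(\Omega)$ in a separate lemma, truncating $h$ at height $n$ (so that $\sup h_n=n\ge\|T_n(f)\|_\infty$) and testing with $(u_n-h^{-1}(\|f\|_\infty))^+$; a comparison argument then yields $0\le h(u_n)\le\|f\|_\infty$, hence $0\le u_n\le h^{-1}(\|f\|_\infty)\le\sigma-\varepsilon$. For a general positive $f\in L^1(\Omega)$ one applies this lemma to the data $T_n(f)$ and obtains $0\le u_n\le\sigma$ uniformly in $n$. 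This single estimate makes everything else elementary: $a(x,u_n)\ge\alpha/(1+\sigma)^\gamma$ on all of $\Omega$, so testing with $u_n$ gives the uniform $H^1_0(\Omega)$ bound directly, and the principal part passes to the limit with no Boccardo--Murat machinery at all, since it is linear in $\nabla u_n$ and one only needs $a(x,u_n)\to a(x,u)$ strongly in $L^2(\Omega)$ together with $\nabla u_n\rightharpoonup\nabla u$ weakly in $L^2(\Omega)$. Your equi-integrability argument for $h_n(u_n)$ is essentially the one in the paper and is fine. To repair your proof, coordinate the two truncations (cap $h$ at the level $n$ rather than at the point $\sigma-1/n$) and prove the sup bound on $u_n$ before anything else.
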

This result shows that the presence of the lower order term $h(u)$ has a strong regularizing effect: it is sufficient to compare
the summability that we have found to summabilities (\ref{somm_u}) and (\ref{somm_grad}) for the solution to the same problem, 
without the lower order term $h(u)$.

Notice that the same results of Theorems \ref{dato_in_luno}, \ref{thm_soluzioni_a_energia_finita} and \ref{asintoto} can be obtained
for more general lower order terms, depending also on the variable $x$. 
\section{Preliminaries}
We will use the following approximating problems to prove
Theorems \ref{dato_in_luno} and \ref{thm_soluzioni_a_energia_finita}: 
\begin{equation}\label{problemi_approssimanti}
\left\{
\begin{array}{cl}
\displaystyle
-\diverg\left(a(x,T_n(v))\nabla v\right)+|v|^{p-1}v=T_n(f)& {\rm in}\,\,\Omega
\\
v=0 & {\rm on}\,\,\partial \Omega\,.
\end{array}
\right.
\end{equation}
The following lemma will be very useful, as it gives us an {\it a priori} 
estimate on the summability of the solutions to problems (\ref{problemi_approssimanti}). 
\begin{lemma}\label{stime_lemma}
Let $f \in L^{m}(\Omega)$, $m \geq 1$. 
Then, for every $n \in \N$, there exists a solution $u_n \in H^1_0(\Omega)$ to
problems (\ref{problemi_approssimanti}) such that
\begin{equation}\label{stima-lemma}
\int_{\Omega}|u_n|^{pm}\leq \int_{\Omega}|f|^m\,;
\end{equation}
$u_n$ satisfies
\begin{equation}\label{form_debole_pb_appross}
\int_{\Omega}a(x,T_n(u_n))\nabla u_n\cdot \nabla \varphi+
\int_{\Omega}|u_n|^{p-1}u_n\varphi=\int_{\Omega}T_n(f)\,\varphi
\end{equation}
for every $\varphi \in H^1_0(\Omega)\,.$
\end{lemma}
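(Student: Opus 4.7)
The lemma has two parts: existence of $u_n\in H^1_0(\Omega)$ solving the approximating equation for every fixed $n$, and the $L^{pm}$ bound \eqref{stima-lemma}. Because the truncation $T_n$ restricts the argument of $a$ to values of modulus at most $n$, the assumptions \eqref{ellitticita}--\eqref{crescita} give
$$
\frac{\alpha}{(1+n)^{\gamma}}\leq a(x,T_n(v))\leq \beta \quad \text{a.e.\ in } \Omega,
$$
so the composition kills the degeneracy and restores a uniformly elliptic structure. Consequently the operator $B_n(v):=-\diverg(a(x,T_n(v))\nabla v)+|v|^{p-1}v$ is bounded, continuous, coercive and pseudomonotone from $H^1_0(\Omega)$ into $H^{-1}(\Omega)$ (the zero-order term is monotone), and since $T_n(f)\in L^\infty(\Omega)\subset H^{-1}(\Omega)$, the classical surjectivity theorem for pseudomonotone operators of Leray--Lions type (\cite{L}) yields some $u_n\in H^1_0(\Omega)$ satisfying \eqref{form_debole_pb_appross}.

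For the $L^{pm}$ bound, the natural (but in general non-admissible) choice is $\varphi=|u_n|^{p(m-1)-1}u_n$, which forces the lower order term to reproduce $|u_n|^{pm}$. To make this rigorous, I would first test with the bounded approximation $\varphi_h=T_h(|u_n|^{p(m-1)-1}u_n)\in H^1_0(\Omega)\cap L^\infty(\Omega)$ and let $h\to+\infty$; in the borderline case $m=1$ the corresponding device is $\varphi_h=T_h(u_n)/h$ with $h\to 0^+$. By the Stampacchia chain rule, $\nabla \varphi_h$ is a nonnegative scalar multiple of $\nabla u_n$, so
$$
\int_{\Omega} a(x,T_n(u_n))\,\nabla u_n\cdot \nabla \varphi_h \geq 0,
$$
and dropping this contribution from \eqref{form_debole_pb_appross} gives
$$
\int_{\Omega}|u_n|^{p-1}u_n\,\varphi_h \leq \int_{\Omega}T_n(f)\,\varphi_h .
$$

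Passing to the limit in $h$ by monotone convergence on the left and dominated convergence on the right yields
$$
\int_{\Omega}|u_n|^{pm}\leq \int_{\Omega}|f|\,|u_n|^{p(m-1)},
$$
after which H\"older's inequality with conjugate exponents $m$ and $m/(m-1)$ (trivial if $m=1$) produces
$$
\int_{\Omega}|u_n|^{pm}\leq \|f\|_{L^m(\Omega)}\Bigl(\int_{\Omega}|u_n|^{pm}\Bigr)^{(m-1)/m},
$$
and \eqref{stima-lemma} follows by dividing (the quantity $\int|u_n|^{pm}$ is finite a priori since $u_n\in H^1_0(\Omega)$ and $|u_n|^{p-1}u_n\in L^2(\Omega)$ from the equation). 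The only delicate point is the rigorous handling of the test function: checking through the chain rule that $\nabla \varphi_h$ has the required sign, and justifying the passage to the limit in $h$. Everything else reduces to the algebra above.
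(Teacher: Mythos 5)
Your proposal is correct and follows essentially the same route as the paper: existence from the classical Leray--Lions/pseudomonotone surjectivity theorem of \cite{L} once the truncation $T_n$ restores uniform ellipticity, and the estimate by testing with $|u_n|^{p(m-1)}\mathrm{sgn}(u_n)$ for $m>1$ (resp.\ $T_h(u_n)/h$ with $h\to 0^+$ and Fatou for $m=1$), dropping the nonnegative principal part and applying H\"older. The only difference is that you justify the admissibility of the test function by truncating it first, a point the paper passes over silently.
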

\begin{proof}
The operator
$$
v \to -\diverg\left(a(x,T_n(v))\nabla v\right)+|v|^{p-1}v\,,\,\,\,\,\,\,\,v \in H^1_0(\Omega)
$$ 
satisfies the hypothesis of standard theorems for elliptic operators (see \cite{L}); 
therefore there exists a solution $u_n \in H^1_0(\Omega)$ to problems (\ref{problemi_approssimanti})
which satisfies
(\ref{form_debole_pb_appross}).

\noindent
To show estimate (\ref{stima-lemma}), we will consider the cases $m>1$ e $m=1$ separately.

{\it Case $m>1$:}
Choosing $\varphi=|u_n|^{p(m-1)}sgn(u_n)$ in (\ref{form_debole_pb_appross}) 
one gets
$$
\int_{\Omega}|u_n|^{pm}
\leq 
\int_{\Omega}|f||u_n|^{p(m-1)};
$$
the H\"older inequality on the right hand side yields
$$
\int_{\Omega}|u_n|^{pm}\leq \norm{f}_{m}\left[\int_{\Omega}|u_n|^{pm}\right]^{1-\frac 1m}
$$
which implies (\ref{stima-lemma}).

{\it Case $m=1$:}
Choosing $\varphi=\dys \frac{T_k(u_n)}{k}$ in (\ref{form_debole_pb_appross})
we get
$$
\int_{\Omega}|u_n|^{p-1}u_n \frac{T_k(u_n)}{k}\leq
\int_{\Omega}T_n(f)\,\frac{T_k(u_n)}{k}\leq\int_{\Omega}|f|\,.
$$
Fatou's lemma implies, for $k\to 0$, estimate (\ref{stima-lemma}).
\end{proof}

In the study of entropy solutions to problem (\ref{problema_di_Dirichlet}) we will need 
the following lemmas.
\begin{lemma}\label{lemmabdo}
Let $u$ be a measurable function in $M^s(\Omega)$, $s>0,$ and suppose that there exists a positive constant $\rho>0$ such that 
$$
\int_\Omega|\nabla T_k(u)|^2 \leq C\,k^{\rho}\,\,\,\forall\,k>0\,,\,\,\forall\,n \in \N\,. 
$$
Then $|\nabla u| \in M^{\frac{2s}{\rho + s}}(\Omega)$.
\end{lemma}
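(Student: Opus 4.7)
The plan is to split the level set $\{|\nabla u|>\lambda\}$ by truncating $u$ at a well-chosen height $k=k(\lambda)$, estimate each piece separately, and then optimize over $k$.

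First, for any $\lambda,k>0$ I would write
\[
\{|\nabla u|>\lambda\}\subset\{|u|\ge k\}\cup\{|u|<k,\ |\nabla u|>\lambda\}.
\]
The measure of the first piece is controlled directly by the hypothesis $u\in M^s(\Omega)$, giving $|\{|u|\ge k\}|\le C_1 k^{-s}$. For the second piece, the key observation is that on the set $\{|u|<k\}$ the weak gradient of $u$ coincides with $\nabla T_k(u)$ (this uses the lemma preceding Definition \ref{def-entropy}). Hence Chebyshev's inequality combined with the hypothesis yields
\[
|\{|u|<k,\ |\nabla u|>\lambda\}|\le |\{|\nabla T_k(u)|>\lambda\}|\le\frac{1}{\lambda^{2}}\int_{\Omega}|\nabla T_k(u)|^{2}\le\frac{C\,k^{\rho}}{\lambda^{2}}.
\]

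Combining, I obtain
\[
|\{|\nabla u|>\lambda\}|\le \frac{C_1}{k^{s}}+\frac{C\,k^{\rho}}{\lambda^{2}}.
\]
Now I would choose $k$ so that the two terms are of the same order, namely $k^{\rho+s}=\lambda^{2}$, i.e.\ $k=\lambda^{2/(\rho+s)}$. Substituting back, both terms become of order $\lambda^{-2s/(\rho+s)}$, and therefore
\[
|\{|\nabla u|>\lambda\}|\le \frac{C'}{\lambda^{2s/(\rho+s)}},
\]
which by definition means $|\nabla u|\in M^{2s/(\rho+s)}(\Omega)$.

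There is no real obstacle; the only subtlety is justifying that on $\{|u|<k\}$ one may replace $\nabla u$ by $\nabla T_k(u)$, which is exactly the content of the lemma stated just before Definition \ref{def-entropy}. The rest is the classical two-parameter truncation/optimization argument for Marcinkiewicz estimates.
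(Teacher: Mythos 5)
Your argument is correct and is precisely the standard two-parameter truncation/optimization proof of this Marcinkiewicz estimate; the paper itself does not reproduce it but simply defers to the reference \cite{bdo}, where this is exactly the argument given. The one subtlety you flag --- that on $\{|u|<k\}$ the weak gradient coincides with $\nabla T_k(u)$ --- is indeed justified by the lemma preceding Definition \ref{def-entropy}, so there is no gap.
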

\begin{rem}\label{remarkbdo}
Lemma \ref{lemmabdo} is true for sequences too. That is, if $u_n$ is a sequence of measurable functions such that 
$$
|\{|u_n|\geq k\}|\leq \frac{C}{k^s}\,\,\,\forall\, n\in \N
$$
$s>0,$ and there exists a positive constant $\rho>0$ such that 
$$
\int_\Omega|\nabla T_k(u_n)|^2 \leq C\,k^{\rho}\,\,\,\forall\,k>0\,,
$$
then 
$$
|\{|\nabla u_n|\geq k\}|\leq \frac{C}{k^{\frac{2s}{\rho +s}}}\,\,\,\forall\, n\in \N\,.
$$ 
\end{rem}
\begin{proof}
See \cite{bdo}.
\end{proof}
\begin{lemma}\label{lemmadeisei}
Let $u_n$ be a sequence of measurable functions such that $T_k(u_n)$ is bounded in $H^1_0(\Omega)$
for every $k>0$. Then there exists a measurable function $u$ such that $T_k(u) \in H^1_0(\Omega)$ and, up to a subsequence,
$u_n\to u$ a.e. in $\Omega$ and $T_k(u_n)\to T_k(u)$ weakly in $H^1_0(\Omega)$. 
\end{lemma}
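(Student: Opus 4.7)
The plan is to build a candidate limit $u$ from the almost-everywhere limits of the truncations $T_k(u_n)$ and then recover $u_n \to u$ a.e.\ from those pointwise limits. First I fix an integer $k$. By hypothesis $T_k(u_n)$ is bounded in $H^1_0(\Omega)$, so from Rellich--Kondrachov compactness I extract a subsequence along which $T_k(u_n) \rightharpoonup w_k$ weakly in $H^1_0(\Omega)$, strongly in $L^2(\Omega)$, and almost everywhere, for some $w_k \in H^1_0(\Omega)$ with $|w_k|\leq k$ a.e. A standard diagonal extraction across $k \in \N$ then produces a single subsequence, still denoted $u_n$, for which these convergences hold simultaneously for every integer $k$.

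Next I establish the compatibility $T_j(w_k)=w_j$ for $j\leq k$: this follows by passing to the a.e.\ limit in the identity $T_j(T_k(u_n))=T_j(u_n)$ and using continuity of $T_j$. In particular $w_{k+1}(x)=w_k(x)$ whenever $|w_k(x)|<k$, so the sequence $\{w_k(x)\}$ is eventually constant off the exceptional set $E=\bigcap_{k\in\N}\{|w_k|=k\}$. I then define $u$ as the pointwise limit $u(x)=\lim_k w_k(x)$ outside $E$, setting $u=0$ on $E$; this $u$ is measurable by construction. Finiteness a.e., i.e.\ $|E|=0$, is the delicate step and I would prove it by combining the lower semicontinuity of the $H^1_0$ norm under weak convergence with the Sobolev embedding $H^1_0(\Omega)\hookrightarrow L^{2^*}(\Omega)$ applied to $w_k$, which forces $|\{|w_k|\geq k\}|\to 0$ and hence $|E|=0$.

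Once $u$ is defined a.e., the almost-everywhere convergence $u_n\to u$ is short: at a point $x$ outside $E$ where $T_k(u_n(x))\to w_k(x)$ for every $k\in\N$, pick an integer $k>|u(x)|+1$; then $w_k(x)=u(x)$ with $|w_k(x)|<k-1$, so $|T_k(u_n(x))|<k$ for $n$ large, which forces $T_k(u_n(x))=u_n(x)$ and hence $u_n(x)\to w_k(x)=u(x)$. Finally, since $T_k$ is continuous, $T_k(u_n)\to T_k(u)$ a.e.; combined with the weak $H^1_0$ bound, uniqueness of limits identifies $T_k(u)=w_k\in H^1_0(\Omega)$, giving $T_k(u_n)\rightharpoonup T_k(u)$ weakly in $H^1_0(\Omega)$. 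The main obstacle, as already indicated, is showing $|E|=0$: after that point every step is routine and relies only on the diagonal extraction and the Sobolev structure.
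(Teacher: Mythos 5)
Your skeleton --- diagonal extraction of weakly and a.e.\ convergent subsequences of $T_k(u_n)$, the compatibility relations $T_j(w_k)=w_j$, reconstruction of $u$ off the exceptional set $E$, and the final identification $T_k(u)=w_k$ --- is exactly the argument of \cite{BBGGPV}, to which the paper simply defers for the proof. But the one step you flag as delicate, $|E|=0$, is a genuine gap, and the mechanism you propose does not close it. Sobolev's embedding together with weak lower semicontinuity gives only
$$
|\{|w_k|\geq k\}|\leq C\left(\frac{\sup_n\norm{\nabla T_k(u_n)}_{L^2}}{k}\right)^{2^*},
$$
and the hypothesis of the lemma puts no restriction on how the $H^1_0$ bound on $T_k(u_n)$ depends on $k$; if that bound grows linearly in $k$ or faster, the right-hand side does not tend to $0$. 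This is not a removable technicality. Take $\phi$ the equilibrium potential of a closed ball $B\subset\subset\Omega$, so that $\int_{\{\phi<t\}}|\nabla \phi|^2=t\,\mathrm{cap}(B,\Omega)$, and set $u_n=h_n(\phi)$ with $h_n$ piecewise linear, $h_n(1-2^{-j-1})=2^j$ for $2^j\leq n$ and $h_n\equiv n$ near $1$. A direct computation gives $\norm{\nabla T_k(u_n)}_{L^2}^2\leq C k^3$ uniformly in $n$, so the hypotheses hold, yet $u_n\equiv n$ on $B$ and no subsequence converges a.e.\ to a finite function there. So the step ``Sobolev forces $|\{|w_k|\geq k\}|\to 0$'' cannot be repaired from the stated hypotheses alone.

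What rescues the lemma everywhere it is invoked in this paper is an additional uniform estimate guaranteeing $\sup_n|\{|u_n|>k\}|\to 0$ as $k\to\infty$: in Part II of the proof of Theorem \ref{dato_in_luno} and Part III of the proof of Theorem \ref{thm_soluzioni_a_energia_finita} this is supplied by the bound $\int_\Omega|u_n|^{pm}\leq\int_\Omega|f|^m$ of Lemma \ref{stime_lemma}, which yields $|\{|u_n|>k\}|\leq Ck^{-pm}$. Granting such an estimate, one gets $|\{|w_k|=k\}|\leq\liminf_n|\{|u_n|>k-1\}|\to 0$, hence $|E|=0$, and the rest of your argument (the compatibility of truncations, the pointwise stabilization, and the deduction of $u_n\to u$ a.e.\ and $T_k(u_n)\rightharpoonup T_k(u)$) is correct and goes through verbatim. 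You should therefore either add the uniform measure decay as an explicit hypothesis or import it from the context in which the lemma is applied; as written, the finiteness of $u$ is unproved and, in the stated generality, unprovable.
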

\begin{proof}
See \cite{BBGGPV}.
\end{proof}

\section{Proofs}
To show Theorems \ref{dato_in_luno} and \ref{thm_soluzioni_a_energia_finita}, we will pass to the limit in (\ref{form_debole_pb_appross}).
All along the proofs $C$ will denote a constant depending only on $p, m, \alpha, \norm{f}_m$ and $\gamma$.
 
Let us begin with the proof of Theorem \ref{dato_in_luno}.
\begin{proof}
We divide our proof into two parts: in the first one we will study the case where $p>\gamma +1$, showing the existence 
of a distributional solution to problem (\ref{problema_di_Dirichlet}), and in the second one we will show the existence of an entropy solution for $0<p\leq \gamma +1$.

{\it PART I:} Let $p>\gamma +1$.
If we choose $\varphi=[(1+|u_n|)^{1-\lambda}-1]sgn(u_n)$, with $\lambda >1$, in (\ref{form_debole_pb_appross}) we get,
using assumption (\ref{ellitticita}) on $a$ 
\begin{equation}
C\int\limits_{\Omega}\frac{|\nabla u_n|^2}{(1+|u_n|)^{\gamma+\lambda}}
\leq \int\limits_{\Omega}|f|\,.
\label{stima_trucco}
\end{equation}
Now, let $q<2$;
 writing
$$
\int_{\Omega}|\nabla u_n|^{q}=
\int_{\Omega}\frac{|\nabla u_n|^{q}}{(1+|u_n|)^{\frac{q}{2}(\gamma+\lambda)}}(1+|u_n|)^{\frac{q}{2}(\gamma+\lambda)}
$$
and
using the H\"older inequality with exponent $\frac{2}{q}$, we get from (\ref{stima_trucco})
$$
\int_{\Omega}|\nabla u_n|^{q}\leq C\left(\int_{\Omega}(1+|u_n|)^{\frac{q}{2-q}(\gamma +\lambda)} \right)^{1-\frac q2}\,.
$$
Thanks to Lemma \ref{stime_lemma}, the right hand side is uniformly bounded if $\frac{q}{2-q}(\gamma + \lambda)=p$, 
that is $q=\frac{2p}{\gamma + p+ \lambda}$. 
Since $\lambda>1$, then $q>1$ and so we get that $u_n$ is uniformly bounded in $W^{1,\beta}_0(\Omega)$, for
$\beta<\frac{2p}{\gamma +p+1}$.
As a consequence there exists a function $u \in W^{1,\beta}_0(\Omega)$, $\beta<\frac{2p}{\gamma +p+1}$ such that
$u_n\to u$ weakly in $W^{1,\beta}_0(\Omega)$, 
up to a subsequence. Moreover $u_n \to u$ a.e. in $\Omega$ and this implies that $u \in L^p(\Omega)$.

We are going to show that $u$ is a distributional solution to 
problem (\ref{problema_di_Dirichlet})
passing to the limit in (\ref{form_debole_pb_appross}). We will suppose that $\varphi \in C^{\infty}_0(\Omega).$
For the first term of the left-hand side, it is sufficient to observe that 
$\nabla u_n \to \nabla u$ weakly in $L^\beta(\Omega)$ and 
$a(x,T_n(u_n)){\nabla \varphi}\to a(x,u){\nabla \varphi}$ 
in $L^m(\Omega)$ for every $m$, due to assumption (\ref{crescita}) on $a$.
The limit of the second term is a little more delicate.
Even if the principal part of our differential operator is degenerate, on can show the following estimate, in the spirit of  \cite{BG1}: 
\begin{equation}\label{boccardo-gallouet}
\int\limits_{\{|u_n|>t\}}|u_n|^p
\leq 
\int\limits_{\{|u_n|>t\}}|f|\,.
\end{equation}
Indeed, let 
$\psi_i$ be a sequence of  increasing, positive, uniformly bounded $C^{\infty}(\Omega)$ functions, 
 such that 
$$
\psi_i(s)\to \left\{
\begin{array}{ll}
1,& s\geq t
\\
0, & |s|<t
\\
-1, & s\leq -t\,.
\end{array}
\right.
$$
Choosing $\psi_i(u_n)$ in (\ref{form_debole_pb_appross}), we get
$$
\int\limits_{\Omega}|u_n|^{p-1}u_n\,\psi_i(u_n)\leq 
\int\limits_{\Omega}T_n(f)\,\psi_i(u_n)\,.
$$
The limit on $i$ implies (\ref{boccardo-gallouet}). 
We are going to use this inequality to show that if 
$E$ is any measurable subset of $\Omega$, then
$$
\lim\limits_{|E|\to 0}\int\limits_E |u_n|^p=0\,\,\,\,\mbox{uniformly with respect to}\,\,n\,.
$$
Using assumption (\ref{boccardo-gallouet}), for any $t>0$ we have
$$
\int\limits_E |u_n|^p\leq t^p\, |E|+\int\limits_{E\cap \{|u_n|>t\}} |u_n|^p\leq t^p\, |E|+
\int\limits_{\{|u_n|>t\}} |f|\,.
$$
The previous lemma and the fact that $f \in L^1(\Omega)$ allow us to say that
for any given $\varepsilon>0$, there exists $t_{\varepsilon}$ such that
$$
\int\limits_{\{|u_n|>t_{\varepsilon}\}} |f|\leq \varepsilon\,.
$$
In this way
$$
\int\limits_E |u_n|^p\leq t_{\varepsilon}^p\, |E|+\varepsilon 
$$
and so
$$
\lim\limits_{|E|\to 0}\int\limits_E |u_n|^p\leq \varepsilon \,\,\,\,\,\,\,\forall\,\,\varepsilon>0\,.
$$ 
We thus proved that
$
\lim\limits_{|E|\to 0}\int\limits_E |u_n|^p=0$ uniformly with respect to $n$. 
Vitali's theorem implies that $|u_n|^{p-1}u_n \to |u|^{p-1}u$ in $L^1(\Omega)$ 
and so we can pass to the limit in
the second term  of the left-hand side of (\ref{form_debole_pb_appross}). 
We get a distributional solution to problem (\ref{problema_di_Dirichlet}) in this way.

\smallskip
{\it Part II:}
Let $p>0$. 
Let us choose 
$T_k(u_n)$ as test function in (\ref{form_debole_pb_appross}):
$$
\int_{\Omega} a(x, T_n(u_n))\nabla u_n\cdot \nabla T_k(u_n)+\int_{\Omega}|u_n|^{p-1}u_n\,T_k(u_n)
=
\int_{\Omega} T_n(f)\,T_k(u_n)\,.
$$
In this way we have
$$
\int_{\Omega}a(x, T_n(u_n))\nabla u_n\cdot \nabla T_k(u_n)\leq C\,k\,,
$$
and so
$$
\int_{\Omega}|\nabla T_k(u_n)|^2 \leq C k(1+k)^{\gamma}\,.
$$
Lemma \ref{lemmadeisei} implies the existence of a measurable function $u$ such that $\forall\, k>0$,  $T_k(u) \in H^1_0(\Omega)$, 
and, up to a subsequence, $T_k(u_n)\to T_k(u)$ weakly in $H^1_0(\Omega)$, and  
$u_n \to u$ a.e.  in $\Omega$.
Fatou's lemma implies that $|u|^p \in L^1(\Omega)$.  
To prove that $|\nabla u| \in M^{\frac{2p}{\gamma +p+1}}(\Omega)$, one can pass
 to the limit, as $n\to \infty,$ in the previous inequality
to get
$$
\int_{\Omega}{|\nabla T_k(u)|^2}\leq 
 C\,k\,{(1+k)^{\gamma}}\,.
$$
Using Lemma \ref{lemmabdo},  $|\nabla u| \in M^{\frac{2p}{\gamma +1+p}}(\Omega)$.

We claim that $u$ is an entropy solution. 
 Indeed, let us choose 
$T_k(u_n-\varphi)$, $\varphi \in H^1_0(\Omega)\cap L^{\infty}(\Omega)$  as test function in (\ref{form_debole_pb_appross}):
$$
\int_{\Omega} a(x, T_n(u_n))\nabla u_n\cdot \nabla T_k(u_n-\varphi)+\int_{\Omega}|u_n|^{p-1}u_n\,T_k(u_n-\varphi)
=
\int_{\Omega} T_n(f)\,T_k(u_n-\varphi)\,.
$$
For the second term of the left hand side and for the right hand side, one can use the Fatou's lemma to pass to the limit. 
For the first term of the left hand side, we can use the same technique as in \cite{bdo} (Theorem 1.17). 
Indeed, let us write it as
$$
\int_\Omega a(x,T_n(u_n))|\nabla T_k(u_n-\varphi)|^2 + 
\int_\Omega a(x,T_n(u_n))\nabla \varphi\cdot \nabla T_k(u_n-\varphi)\,.
$$
Passing to the limit in the first term, we have, since $T_k(u_n) \to T_k(u)$ weakly in $H^1_0(\Omega)$ and $u_n \to u$ a.e. in $\Omega$,
$$
\liminf_{n\to \infty}\int_\Omega a(x,T_n(u_n))|\nabla T_k(u_n-\varphi)|^2 \geq
\int_\Omega a(x,u)|\nabla T_k(u-\varphi)|^2\,;
$$
the second term tends to
$\dys\int_\Omega a(x,u)\nabla \varphi\cdot \nabla T_k(u-\varphi)$\,.
Therefore 
$$
\liminf_{n\to \infty}\int_\Omega a(x, T_n(u_n))\nabla u_n\cdot \nabla T_k(u_n-\varphi)\geq \int_\Omega a(x, u)\nabla u\cdot \nabla T_k(u-\varphi)\,.
$$
\end{proof}

Let us show Theorem \ref{thm_soluzioni_a_energia_finita}, that is the existence of solutions to problem 
(\ref{problema_di_Dirichlet}) in the case where $f \in L^m(\Omega)$, $m>1$.
\begin{proof}
We will divide our proof into three parts, according to the different values of $p$.

{\it PART I}: Suppose  $\dys p\geq \frac{\gamma+1}{m-1}$.
If we choose $\varphi=[(1+|u_n|)^{\gamma+1}-1]sgn(u_n)$ in (\ref{form_debole_pb_appross}) we get,
using assumption (\ref{ellitticita}) on $a$ 
$$
C\int\limits_{\Omega}\frac{|\nabla u_n|^2}{(1+|T_n(u_n)|)^{\gamma}}(1+|T_n(u_n)|)^{\gamma}
\leq \int\limits_{\Omega}|T_n(f)|\,[(1+|u_n|)^{\gamma+1}-1]\,.
$$
As a consequence
$$
\int\limits_{\Omega}|\nabla u_n|^2 
\leq
C\int\limits_{\Omega}|f||u_n|^{\gamma + 1}\,.
$$
The H\"older inequality on the right hand side and Lemma \ref{stime_lemma} imply that
$$
\int\limits_{\Omega}|f||u_n|^{\gamma + 1}\leq 
\norm{f}_m\left[\int\limits_{\Omega}|u_n|^{(\gamma + 1)\frac{m}{m-1}}\right]^{\frac{m-1}{m}}
<\infty
$$
if $(\gamma+1)\frac{m}{m-1}\leq pm$. Under this assumption
$$
\int\limits_{\Omega}|\nabla u_n|^2 \leq C\,,\,\,\,\,\forall\,n\in \N\,.
$$
Up to a subsequence, 
there exists a function $u \in H^1_0(\Omega)$ such that
$u_n\to u$ weakly in $H^1_0(\Omega)$ and a.e. in $\Omega$. Moreover $u \in L^{pm}(\Omega)$.

We are going to show that $u$ is a solution to problem (\ref{problema_di_Dirichlet}), 
passing to the limit in (\ref{form_debole_pb_appross}). 
For the first term of the left-hand side, it is sufficient to observe that 
$
a(x,T_n(u_n)){\nabla \varphi}\to a(x,u){\nabla \varphi}$ in $L^r(\Omega)$, for any $r\geq 1$,
due to assumption (\ref{crescita}) on $a$.
For the second term, since $|u_n|^{p-1}u_n$ is uniformly bounded in $L^m(\Omega)$ with $m>1$ and $u_n \to u$ a.e., we can deduce that
$|u_n|^{p-1}u_n\to |u|^{p-1}u$ in $L^1(\Omega)$.

\smallskip
{\it PART II:}
Suppose $\dys \frac{\gamma}{m-1}<p< \frac{\gamma+1}{m-1}.$
If we choose $\varphi=[(1+|u_n|)^{p(m-1)}-1]sgn(u_n)$ in (\ref{form_debole_pb_appross}), we get,
using assumption (\ref{ellitticita}) on $a$ 
$$
\int\limits_{\Omega}\frac{|\nabla u_n|^2}{(1+|u_n|)^{\gamma-p(m-1)+1}}
\leq C \int\limits_{\Omega}|f|\,|u_n|^{p(m-1)}\,.
$$
Now, using the H\"older inequality in the right hand side of the previous inequality and Lemma \ref{stime_lemma}, we get
\begin{equation}
\int\limits_{\Omega}\frac{|\nabla u_n|^2}{(1+|u_n|)^{\gamma-p(m-1)+1}}
\leq C\left[\int\limits_{\Omega}|u_n|^{pm}\right]^{1-\frac{1}{m}}\leq C \,\,\,\,\forall\, n\in \N\,.
\label{stima_trucco_m_maggiore_uno}
\end{equation}
From the other hand, let $\sigma<2$;
 writing
$$
\int\limits_{\Omega}|\nabla u_n|^{\sigma}=
\int\limits_{\Omega}\frac{|\nabla u_n|^{\sigma}}{(1+|u_n|)^{\frac{\sigma}{2}[\gamma-p(m-1)+1]}}(1+|u_n|)^{\frac{\sigma}{2}[\gamma-p(m-1)+1]}
$$
and
using the H\"older inequality with exponent $\frac{2}{\sigma}$, we get from (\ref{stima_trucco_m_maggiore_uno})
$$
\int\limits_{\Omega}|\nabla u_n|^{\sigma}\leq C\left(\int_{\Omega}(1+|u_n|)^{\frac{\sigma}{2-\sigma}[\gamma -p(m-1)+1]} \right)^{1-\frac \sigma2}\,.
$$
Thanks to Lemma \ref{stime_lemma},  if $\dys \frac{\sigma}{2-\sigma}[\gamma-p(m-1)+1]= pm$, that is $\dys \sigma=\frac{2pm}{\gamma + p+ 1}$, the last quantity is uniformly bounded. 
Notice that $\sigma<2$,  since we are assuming that $\dys p<\frac{\gamma +1}{m-1}$.
If $\dys \frac{2pm}{\gamma + p+ 1}>1$, the fact that
$$
\int\limits_{\Omega}|\nabla u_n|^{\frac{2pm}{\gamma +p+1}}\leq C\,\,\,\forall\, n\in \N
$$  
implies the existence of a function $u \in W^{1,\frac{2pm}{\gamma +p+1}}_0(\Omega)$ such that, up to a subsequence, 
$u_n\to u$ weakly in $W^{1,\frac{2pm}{\gamma +p+1}}_0(\Omega)$ and a.e. in $\Omega$; moreover $|u|^{pm}\in L^1(\Omega)$.
One can show that $u$ is a distributional solution to 
problem (\ref{problema_di_Dirichlet}), as in {\it PART I}.

\smallskip
{\it PART III:}
Suppose that $\dys p\leq \frac{\gamma}{m-1}$. We show
that there exists an entropy solution to problem (\ref{problema_di_Dirichlet}). 
Estimate (\ref{stima_trucco_m_maggiore_uno}) (obtained indipendently from the value of $p$) implies that
$$
\int\limits_{\{|u_n|>k\}}\frac{|\nabla u_n|^2}{(1+|u_n|)^{\gamma-p(m-1)+1}}
\leq C
$$
and consequently
$$
\int\limits_{\{|u_n|<k\}}|\nabla T_k(u_n)|^2\leq C(1+k)^{\gamma-p(m-1)+1}\,.
$$
Thanks to Lemma \ref{lemmadeisei}, there exists a function $u$ such that 
$T_k(u) \in H^1_0(\Omega), \,\forall\, k$, $T_k(u_n)\to T_k(u)$ weakly in $H^1_0(\Omega)$ and $u_n \to u$ a.e. in $\Omega$.
Therefore we can pass to the limit as $n \to \infty$ in the previous inequality to get
$$
\int_{\Omega}|\nabla T_k(u)|^2\leq C(1+k)^{\gamma-p(m-1)+1}\,.
$$
Lemma \ref{lemmabdo} gives us, if $p<\frac{\gamma +1}{m-1}$ 
$$
|\nabla u| \in M^{\frac{2pm}{\gamma +1+p}}(\Omega)\,.
$$
Since $|u_n|^{pm}$ is uniformly bounded in $L^1(\Omega)$, we can say that $|u|^{pm} \in L^1(\Omega)$.
One can show that $u$ is an entropy solution, using the same method as in {\it PART II} of the proof of Theorem \ref{dato_in_luno}. 
\end{proof}  

We are going to show Theorem \ref{asintoto}. We will follow the technique used in \cite{B}.
In the lemma below we prove the existence of a solution to problem (\ref{probl_asintoto}) in the case where the datum $f$ is a bounded function.
\begin{lemma}
Let $f$ be a positive $L^{\infty}(\Omega)$ function. Then there exists a distributional solution $u \in L^{\infty}(\Omega)\cap H^1_0(\Omega)$
to problem (\ref{probl_asintoto}). Moreover $0\leq u(x)\leq \sigma$ a.e. in $\Omega$.
\end{lemma}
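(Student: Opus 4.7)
The plan is to regularize the vertical asymptote of $h$ by truncation, solve the resulting bounded problem by the standard Leray--Lions theory, and pass to the limit using the boundedness of $f$ in an essential way.

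For each $n \in \N$, extend $h$ to $\R$ by $h(s)=0$ for $s\le 0$ and set $h_n(s)=h(T_{\sigma-1/n}(s^+))$, so that $h_n:\R\to[0,h(\sigma-1/n)]$ is continuous, bounded, increasing, agrees with $h$ on $[0,\sigma-1/n]$, and satisfies $h_n(s)s\ge 0$. The operator $v\mapsto -\diverg(a(x,T_n(v))\nabla v)+h_n(v)$ is bounded, pseudomonotone and coercive on $H^1_0(\Omega)$ (since $a(x,T_n(v))\ge \alpha(1+n)^{-\gamma}$), so the approximate problem
$$-\diverg(a(x,T_n(u_n))\nabla u_n)+h_n(u_n)=f \text{ in } \Omega,\qquad u_n\in H^1_0(\Omega),$$
admits a solution. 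Testing with $-u_n^-$, and using $f\ge 0$ together with $h_n(u_n)u_n^-\equiv 0$, yields $u_n\ge 0$ a.e.

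Next I derive a uniform $L^\infty$ bound. Because $h(s)\to +\infty$ as $s\to\sigma^-$, one can fix $n_0$ large enough that $h(\sigma-1/n_0)>\|f\|_{L^\infty}$. For every $n\ge n_0$ I test against $(u_n-(\sigma-1/n_0))^+\in H^1_0(\Omega)$: discarding the non-negative principal part and using the monotonicity $h_n(u_n)\ge h(\sigma-1/n_0)$ on $\{u_n>\sigma-1/n_0\}$ gives
$$\bigl(h(\sigma-1/n_0)-\|f\|_{L^\infty}\bigr)\int_\Omega (u_n-(\sigma-1/n_0))^+\le 0,$$
forcing $0\le u_n\le \sigma-1/n_0$ a.e., uniformly in $n\ge n_0$. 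In particular $h_n(u_n)=h(u_n)\le h(\sigma-1/n_0)$ in $L^\infty(\Omega)$, and since $|u_n|\le\sigma$ the coercivity constant $a(x,T_n(u_n))\ge \alpha(1+\sigma)^{-\gamma}$ becomes uniform; testing against $u_n$ and discarding the positive $h$-term then produces $\|u_n\|_{H^1_0(\Omega)}\le C$.

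Up to a subsequence, $u_n\rightharpoonup u$ in $H^1_0(\Omega)$ and $u_n\to u$ a.e., with $0\le u\le \sigma-1/n_0<\sigma$. For $n$ large enough $T_n(u_n)=u_n$, so assumption (\ref{crescita}) and dominated convergence give $a(x,T_n(u_n))\nabla \varphi\to a(x,u)\nabla \varphi$ strongly in $L^2(\Omega)$ for any $\varphi\in H^1_0(\Omega)\cap L^\infty(\Omega)$, and combined with the weak $L^2$ convergence of $\nabla u_n$ this handles the principal part. Continuity of $h$ and the uniform bound $h(u_n)\le h(\sigma-1/n_0)$ yield $h(u_n)\to h(u)$ in $L^1(\Omega)$ by dominated convergence. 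Passing to the limit therefore gives the required weak identity for all $\varphi\in H^1_0(\Omega)\cap L^\infty(\Omega)$, with $u\in H^1_0(\Omega)\cap L^\infty(\Omega)$ and $0\le u(x)\le \sigma$ a.e. The main obstacle is precisely the uniform $L^\infty$ bound $u_n\le \sigma-1/n_0$: without it $h(u_n)$ could blow up and the limit $h(u)$ would be undefined on a set of positive measure. This is the only place where $f\in L^\infty(\Omega)$ is genuinely required, and it is exactly the reason why an additional approximation of $f$ by bounded data will be needed to deduce Theorem~\ref{asintoto} from this lemma.
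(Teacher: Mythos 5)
Your proof is correct and follows essentially the same strategy as the paper: truncate $h$ to get solvable approximate problems, obtain the uniform bound $0\le u_n\le \sigma-\varepsilon$ by testing with the positive part of $u_n$ minus a constant at which $h$ exceeds $\norm{f}_{\infty}$, deduce the uniform $H^1_0(\Omega)$ estimate, and pass to the limit using the resulting uniform bound on $h(u_n)$. The only differences are cosmetic: you truncate $h$ in its domain at $\sigma-1/n$ rather than in its values at level $n$, and you make explicit the sign argument and the dominated-convergence step that the paper delegates to \cite{B}.
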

\begin{proof}
The standard theorems on elliptic operators (see \cite{L}) imply the existence of a solution $u_n \in H^1_0(\Omega)$ to the approximating problems
 \[
\left\{
\begin{array}{cl}
\displaystyle
-{\rm div}\left(a(x,T_n(v)){\nabla v}\right)+h_n(v)=f& {\rm in}\,\,\Omega
\\
v=0 & {\rm on}\,\,\Omega
\end{array}
\right.
\]
where 
$$
h_n(s)=
\left\{
\begin{array}{cl}
h(s), & h(s)<n \,\,{\textnormal{and}}\,\, s<\sigma
\\
n, & h(s)\geq n \,\,{\textnormal{and}}\,\, s<\sigma
\\
n, & s\geq \sigma\,.
\end{array}
\right.
$$
The use of $ (u_n  - h^{-1}(\norm{f}_{\infty}))^+$  as test
function yields
$$
  \int_{\Omega} (h_n(u_n) - f) (u_n  - h^{-1}(\norm{f}_{\infty}))^+\leq 0.
$$
 From the  previous inequality we have
$$
0\geq \int\limits_{\{h(u_n)  \geq  \norm{f}_{\infty}\}}
(T_n(h(u_n)) - f) (u_n  -h^{-1}(\norm{f}_{\infty})) 
$$
$$
=
\int\limits_{\{ n> h(u_n)  \geq   \norm{f}_{\infty}\}}
( h(u_n)  - f) (u_n  -h^{-1}(\norm{f}_{\infty}))
$$
$$
+
\int\limits_{\{h(u_n)  \geq n\geq  \norm{f}_{\infty}\}}
(n - f) (u_n  -h^{-1}(\norm{f}_{\infty}))
\geq 0.
$$
This implies that
$$
0\leq h(u_n) \leq \norm{f}_{\infty}\,,
$$
and so 
\begin{equation}
\label{stima_infinito}
0\leq u_n \leq h^{-1}(\norm{f}_{\infty})\leq \sigma-\varepsilon\,.
\end{equation}
Choosing $\varphi=u_n$ we get an uniform bound 
for the $H^1_0(\Omega)$ norm of $u_n$. Indeed using assumption (\ref{ellitticita}) on $a$ 
and the fact that
$h(u_n)\geq 0$, (\ref{stima_infinito}) implies that 
$$
\alpha \int\limits_{\Omega}\frac{|\nabla u_n|^2}{(1+\sigma)^{\gamma}}\leq \norm{f}_1\sigma\,.
$$
This yields the existence of a function $u\in H^1_0(\Omega)\cap L^{\infty}(\Omega)$ 
such that, up to a subsequence, $u_n\to u$ weakly in $H^1_0(\Omega)$ and a.e. 
in $\Omega$. Remark that $u_n=T_n(u_n)$ for $n$ large enough. Consequently 
$u_n$ satisfies 
$$
\int\limits_{\Omega}a(x,u_n)\nabla u_n\cdot \nabla \varphi+
\int\limits_{\Omega}h_n(u_n)\varphi=\int\limits_{\Omega}f\,
\varphi\quad \forall\,\varphi\in H^1_0(\Omega)\cap L^{\infty}(\Omega)\,.
$$
Let us show that $u$ is a solution to (\ref{probl_asintoto}), 
passing to the limit in the previous equality. About the first term it is sufficient to observe that
$a(x,u_n)\nabla u_n \to a(x,u)\nabla u$ weakly in $L^2(\Omega)$, because $a(x,u_n)\to a(x,u)$ in $L^2(\Omega)$ and 
$\nabla u_n \to \nabla u$ weakly in $L^2(\Omega)$. 
One can use the same technique as in \cite{B} to pass to the limit in the second term using (\ref{stima_infinito}) and to show that $h(u) \in L^1(\Omega)$.  
\end{proof}
We remark that the previous lemma gives us an upper bound for the solution $u$ (that is $u\leq \sigma-\varepsilon$) independently on the datum $f$.

We are now able to show Theorem \ref{asintoto}.
\begin{proof}
The following sequence of approximating problems
$$
\left\{
\begin{array}{cl}
\displaystyle
-\diverg\left(a(x,v)\nabla v\right)+h(v)=T_n(f)& {\rm in}\,\,\Omega
\\
v=0 & {\rm on}\,\,\partial \Omega
\end{array}
\right.
$$
has a solution
$u_n$ such that $0\leq u_n\leq \sigma-\varepsilon$ 
for every $n\in \N$, according to the previous lemma; $u_n$ satisfies
\begin{equation}\label{asintoto-teorema}
\int\limits_{\Omega}a(x,u_n)\nabla u_n\cdot \nabla \varphi+
\int\limits_{\Omega}h_n(u_n)\varphi=\int\limits_{\Omega}T_n(f)\,\varphi\quad \forall\,\varphi\in H^1_0(\Omega)\cap L^{\infty}(\Omega)\,.
\end{equation}
As in the previous lemma one has
$$
\alpha \int\limits_{\Omega}\frac{|\nabla u_n|^2}{(1+\sigma)^{\gamma}}\leq ||f||_1\,\sigma\,.
$$
Therefore there exists a function $u \in H^1_0(\Omega)$ such that, up to a subsequence,
$u_n\to u$ weakly in $H^1_0(\Omega)$ and a.e. in $\Omega$; moreover $0\leq u \leq \sigma$. 
To show that $u$ is a solution to (\ref{probl_asintoto}), 
we pass to the limit in (\ref{asintoto-teorema}). 
As in the previous Lemma, it is easy to prove that
$a(x,u_n)\nabla u_n \to a(x,u)\nabla u$ weakly in $L^2(\Omega)$. 
One can use the same technique as in \cite{B} to pass to the limit in the second term. Indeed 
showing the estimate
$$
\int\limits_{\{s\leq u_n< \sigma\}} h(u_n) \leq \int\limits_{\{s\leq u_n< \sigma\}} f 
$$
one can prove that 
for any measurable set $E\subset \Omega$
$$
\int_E h(u_n)\leq \int\limits_{\{s\leq u_n< \sigma\}} f + h(s)|E|\,.
$$
Since $\lim\limits_{s \to \sigma}|\{s\leq u_n< \sigma\}|=0$ we get that
$\{h(u_n)\}$ is equi-integrable.  This implies that $h(u_n)\to h(u)$ in $L^1(\Omega),$ as required.
\end{proof}



\end{document}